\tikzset{join/.code=\tikzset{after node path={%
\ifx\tikzchainprevious\pgfutil@empty\else(\tikzchainprevious)%
edge[every join]#1(\tikzchaincurrent)\fi}}}
\tikzset{>=stealth',every on chain/.append style={join},
         every join/.style={->}}
\tikzset{
    >=stealth',
    punkt/.style={
           rectangle,
           rounded corners,
           draw=black, very thick,
           text width=6.5em,
           minimum height=2em,
           text centered},
    pil/.style={
           ->,
           thick,
           shorten <=2pt,
           shorten >=2pt,}
}
\newtheorem{thm}{Theorem}[section]
\newtheorem{lem}[thm]{Lemma}
\theoremstyle{definition}
\newtheorem{remark}[thm]{Remark}
\newtheorem{definition}[thm]{Definition}
\font\black=cmbx10 \font\sblack=cmbx7 \font\ssblack=cmbx5 \font\blackital=cmmib10  \skewchar\blackital='177
\font\sblackital=cmmib7 \skewchar\sblackital='177 \font\ssblackital=cmmib5 \skewchar\ssblackital='177
\font\sanss=cmss12 \font\ssanss=cmss8 scaled 900 \font\sssanss=cmss8 scaled 600 \font\blackboard=msbm10
\font\sblackboard=msbm7 \font\ssblackboard=msbm5 \font\caligr=eusm10 \font\scaligr=eusm7 \font\sscaligr=eusm5
\font\bsymb=cmsy10 scaled\magstep2
\def\all#1{\setbox0=\hbox{\lower1.5pt\hbox{\bsymb
       \char"38}}\setbox1=\hbox{$_{#1}$} \box0\lower2pt\box1\;}
\def\exi#1{\setbox0=\hbox{\lower1.5pt\hbox{\bsymb \char"39}}
       \setbox1=\hbox{$_{#1}$} \box0\lower2pt\box1\;}
\def\sss#1{{\fam\ssfam\relax#1}}
\def\pmb#1{\setbox0\hbox{${#1}$} \copy0 \kern-\wd0 \kern.2pt \box0}
\def\pmbb#1{\setbox0\hbox{${#1}$} \copy0 \kern-\wd0
      \kern.2pt \copy0 \kern-\wd0 \kern.2pt \box0}
\def\pmbbb#1{\setbox0\hbox{${#1}$} \copy0 \kern-\wd0
      \kern.2pt \copy0 \kern-\wd0 \kern.2pt
    \copy0 \kern-\wd0 \kern.2pt \box0}
\def\pmxb#1{\setbox0\hbox{${#1}$} \copy0 \kern-\wd0
      \kern.2pt \copy0 \kern-\wd0 \kern.2pt
      \copy0 \kern-\wd0 \kern.2pt \copy0 \kern-\wd0 \kern.2pt \box0}
\def\pmxbb#1{\setbox0\hbox{${#1}$} \copy0 \kern-\wd0 \kern.2pt
      \copy0 \kern-\wd0 \kern.2pt
      \copy0 \kern-\wd0 \kern.2pt \copy0 \kern-\wd0 \kern.2pt
      \copy0 \kern-\wd0 \kern.2pt \box0}
\mathchardef\za="710B  
\mathchardef\zb="710C  
\mathchardef\zg="710D  
\mathchardef\zd="710E  
\mathchardef\zve="710F 
\mathchardef\zz="7110  
\mathchardef\zh="7111  
\mathchardef\zvy="7112 
\mathchardef\zi="7113  
\mathchardef\zk="7114  
\mathchardef\zl="7115  
\mathchardef\zm="7116  
\mathchardef\zn="7117  
\mathchardef\zx="7118  
\mathchardef\zp="7119  
\mathchardef\zr="711A  
\mathchardef\zs="711B  
\mathchardef\zt="711C  
\mathchardef\zu="711D  
\mathchardef\zvf="711E 
\mathchardef\zq="711F  
\mathchardef\zc="7120  
\mathchardef\zw="7121  
\mathchardef\ze="7122  
\mathchardef\zy="7123  
\mathchardef\zf="7124  
\mathchardef\zvr="7125 
\mathchardef\zvs="7126 
\mathchardef\zf="7127  
\mathchardef\zG="7000  
\mathchardef\zD="7001  
\mathchardef\zY="7002  
\mathchardef\zL="7003  
\mathchardef\zX="7004  
\mathchardef\zP="7005  
\mathchardef\zS="7006  
\mathchardef\zU="7007  
\mathchardef\zF="7008  
\mathchardef\zW="700A  
\newcommand{\be}{\begin{equation}}
\newcommand{\ee}{\end{equation}}
\newcommand{\bea}{\begin{eqnarray}}
\newcommand{\eea}{\end{eqnarray}}
\newcommand{\beas}{\begin{eqnarray*}}
\newcommand{\eeas}{\end{eqnarray*}}
\def\*{{\textstyle *}}
\newcommand{\w}{\wedge}
\def\cA{{\cal A}}
\def\cE{{\cal E}}
\def\cJ{{\cal J}}
\def\cO{{\cal O}}
\def\cU{{\cal U}}
\def\sT{{\sss T}}
\def\cM{{\cal M}}
\newcommand{\p}{\partial}
\newcommand{\la}{\langle}
\newcommand{\ra}{\rangle}
\newcommand{\Ci}{C^{\infty}}
\newcommand{\N}{\mathbb{N}}
\newcommand{\Z}{\mathbb{Z}}
\newcommand{\R}{\mathbb{R}}
\newcommand{\lp}{\left(}
\newcommand{\rp}{\right)}
\newcommand{\op}[1]{\!\!\mathop{\rm ~#1}\nolimits}
\newcommand{\id}{\op{id}}
\newcommand{\ao}{\mathbf{1}^n}
\begin{document}
\title{\bf Splitting theorem for $\Z_2^n$-supermanifolds}
\date{}
\author{Tiffany Covolo\footnote{National Research University High School of Economics, Moscow, Russia, covolotiffany@gmail.com}, Janusz Grabowski\footnote{Institute of Mathematics, Polish Academy of Sciences, Warsaw, Poland, jagrab@impan.pl}, and Norbert Poncin\footnote{University of Luxembourg, Luxembourg City, Grand-Duchy of Luxembourg, norbert.poncin@uni.lu}}

\maketitle

\begin{abstract}Smooth $\Z_2^n$-supermanifolds have been introduced and studied recently. The corresponding sign rule is given by the `scalar product' of the involved $\Z_2^n$-degrees. It exhibits interesting changes in comparison with the sign rule using the parity of the total degree. With the new rule, nonzero degree even coordinates are not nilpotent, and even (resp., odd) coordinates do not necessarily commute (resp., anticommute) pairwise.
The classical Batchelor-Gaw\c edzki theorem says that any smooth supermanifold is diffeomorphic to the `superization' $\zP E$ of a vector bundle $E$. It is also known that this result fails in the complex analytic category. Hence, it is natural to ask whether an analogous statement goes through in the category of $\Z_2^n$-supermanifolds with its local model made of formal power series. We give a positive answer to this question.
\end{abstract}

\vspace{2mm} \noindent {\bf MSC 2010}: 17A70, 58A50, 13F25, 16L30 \medskip

\noindent{\bf Keywords}: Supersymmetry, supergeometry, superalgebra, higher grading, sign rule, ringed space, higher vector bundle, split supermanifold


\section{Introduction}

A few papers on $\Z_2^n$-Superalgebra and $\Z_2^n$-Supergeometry appeared recently \cite{COP12,CGP14,CGP16,Pon16}.\medskip

In standard Supergeometry or $\Z_2$-Supergeometry (resp., in $\Z_2^2$-Supergeometry; $\Z_2^3$-Super\-geometry), one considers coordinates of degrees $0$ and $1$ ($\,$resp., $$(0,0), (1,1), (0,1),\;\text{and}\;(1,0)\;;$$ $$(0,0,0), (0,1,1),(1,0,1), (1,1,0), (0,0,1), (0,1,0), (1,0,0),\;\text{and}\;(1,1,1)\;)\;.$$ In the $\Z_2^n$-case, there exist $2^n$ different degrees, the first $2^{n-1}$ being even and the second $2^{n-1}$ odd. However, the commutation rule for the coordinates is not, as usual, given by the product of the parities, but by the scalar product of the involved degrees. More precisely, if $y$ (resp., $\zh$) is of degree $(0,1,1)$ (resp., $(0,1,0)$), we set \be\label{CommRule}y\cdot\zh=(-1)^{\la (0,1,1),(0,1,0)\ra}\zh\cdot y=-\zh\cdot y\;,\ee where $\la-,-\ra$ denotes the standard scalar product in $\R^3\,.$ This `scalar product commutation rule' implies significant differences with the classical theory: even coordinates may anticommute ($(-1)^{\la (1,1,0),(1,0,1) \ra}=-1$), odd coordinates may commute ($(-1)^{\la (1,0,0),(0,1,0) \ra}=+1$), and nonzero degree even coordinates are not nilpotent ($(-1)^{\la (1,1,0),(1,1,0) \ra}=+1$).\medskip

The study of $\Z_2^n$-gradings, $n\ge 0$, together with the commutation rule (\ref{CommRule}), is in some sense necessary and sufficient. Sufficient, since any sign rule, for any finite number $m$ of coordinates, is of the form (\ref{CommRule}), for some $n\le 2m$ \cite{CGP14}, \cite{CGP16}; and necessary, in view of the needs of Physics, Algebra, and Geometry. In Physics, $\Z_2^n$-gradings, $n\ge 2,$ are used in string theory and in parastatistical supersymmetry \cite{AFT10}, \cite{YJ01}. In Mathematics, there exist good examples of $\Z_2^n$-graded $\Z_2^n$-commutative algebras: the algebra of Deligne differential superforms is $\Z_2^2$-commutative, $$\za \wedge \zb = (-1)^{\deg({\za})\deg({\zb})+\op{p}(\za)\op{p}(\zb)}\zb\wedge\za\;, $$ where $\deg$ (resp., $\op{p}$) denotes the cohomological degree (resp., the parity) of the superforms $\za$ and $\zb$, the algebra $\mathbb{H}$ of quaternions is $\Z_2^3$-commutative, and, more generally, any Clifford algebra $\op{Cl}_{p,q}(\R)$ is $\Z_2^{p+q+1}$-commutative \cite{COP12}, ... And there exist interesting examples of $\Z^n_2$-supermanifolds: the tangent and cotangent bundles $T \cM$ and $T^\star \cM$ of a standard supermanifold, the superization of double vector bundles such as, e.g., $TT M$ and $T^*TM$, where $M$ is a classical purely even manifold, and, more generally, the superization of $n$-vector bundles, ...\medskip

For instance, if $(x,\xi)$ are the coordinates of $\cM$, the coordinates of $T\cM$ are $(x,\xi,\op{d}x,\op{d}\xi)$. As concerns degrees, we have two possibilities. Either, we add the cohomological degree $1$ of $\op{d}$ and the parities $0$ (resp., $1$) of $x$ (resp., $\xi$), or, we keep them separated (richer information). In the first case, the coordinates $(x,\xi,\op{d}x,\op{d}\xi)$ have the parities $(0,1,1,0)$, we use the standard supercommutation rule and obtain a classical supermanifold; in the second, the coordinates $(x,\xi,\op{d}x,\op{d}\xi)$ have the degrees $((0,0),(0,1),(1,0),(1,1))$, we apply the $\Z_2^2$-commutation rule (\ref{CommRule}) and get a $\Z_2^2$-manifold. The local model of the supermanifold $T\cM$ is of course made of the polynomials $\Ci(x,\op{d}\xi)[\xi,\op{dx}]$ in the odd indeterminates with coefficients that are smooth with respect to the even variables. On the other hand, the base of the $\Z_2^2$-manifold $T\cM$ is -- exactly as in $\Z$-graded geometry -- made only of the degree $(0,0)$ variables, whereas, with respect to the other indeterminates, we consider not only polynomials, but all power series $\Ci(x)[[\xi,\op{d}x,\op{d}\xi]]$.\medskip

Let us comment on the latter local model. Consider an arbitrary $\Z_2^2$-manifold with coordinates $(x,y,\xi,\zh)$ of degrees $((0,0),(1,1),(0,1),(1,0))$, and let $$\phi: \, \{x,y,\xi,\zh\}\mapsto \{x',y',\xi',\zh'\}$$ be the coordinate transformation \be x'= x+y^2,\quad y'=y,\quad \xi'=\xi,\quad \zh'=\zh\;.\label{CoordTransEx}\ee Note that (\ref{CoordTransEx}) respects the $\Z_2^2$-degree and that $y$ is, as mentioned above, {\it not} nilpotent. If we now change coordinates in a target function of the type $F(x')$, we get, using as usual a formal Taylor expansion,
$$F(x')= F(x+y^2) = \sum_{\za}\frac{1}{\za !} (\p_{x'}^{\za} F)(x) y^{2\za}\;,$$ where the {\small RHS} is really a series, precisely because $y$ is not nilpotent. However, the pullback of a target function must be a source function. The only way out is to decide that functions are formal series, thus opting for the aforechosen local model \be\label{LocMod}(U,\,\Ci(x)[[y,\xi,\zh]])\;,\ee where $U$ is open in some $\R^p$ (of course, since $\xi$ and $\zh$ {\it are} nilpotent, they appear in the series with exponent $0$ or $1$). With this in mind, one easily sees that the most general coordinate transformation is
\be\label{CoordTransGen}
\begin{cases}
x'= {\sum_r f^{x'}_r(x)y^{2r}+\sum_r g^{x'}_r(x)y^{2r+1}\xi\zh}\\
y'= \sum_r f^{y'}_r(x)y^{2r+1}+\sum_r g^{y'}_r(x) y^{2r}\xi\zh\\
\xi'=\sum_r f^{\xi'}_r(x)y^{2r}\xi+\sum_r g^{\xi'}_r(x)y^{2r+1}\zh\\
\zh'= \sum_r f^{\zh'}_r(x)y^{2r}\zh+\sum_r g^{\zh'}_r(x)y^{2r+1}\xi\;\;\;\;, \\
\end{cases}
\ee where $r\in\N$, so that all sums are series, and where the coefficients are smooth in $x$. Let us stress that if we perform this general coordinate transformation (with series) in a target function (that is itself a series), we might a priori obtain series of smooth coefficients, which would then lead to convergence conditions. Fortunately, one can show that this problem does not appear.\medskip

{Note also that the Jacobian matrix that corresponds to (\ref{CoordTransGen}) is of the type
\be\label{Jacobian}
\begin{tikzpicture}
\node[anchor=east] at (-0.5,2) {$\p_{(x,y,\xi,\zh)}(x',y',\xi',\zh') =$};
{\draw (0,0.4) to[out=110,in=-110] (0,3.6); }
{\draw (4,0.4) to[out=70,in=-70] (4,3.6); }
{\draw[very thick] (2,0.4) -- (2,3.6); }
{\draw[very thick] (0,2) -- (4,2); }
	{\draw (0,1.2) -- (4,1.2); }
	{\draw (0,2) -- (4,2); }
	{\draw (0,2.8) -- (4,2.8); }
	{\draw (1,0.4) -- (1,3.6); }
	{\draw (2,0.4) -- (2,3.6); }
	{\draw (3,0.4) -- (3,3.6); }
	\node at (0.5,3.2) {${(0,0)}$};
	\node at (1.5,3.2) {${(1,1)}$};
	\node at (2.5,3.2) {${(0,1)}$};
	\node at (3.5,3.2) {${(1,0)}$};
	\node at (0.5,2.4) {${(1,1)}$};
	\node at (1.5,2.4) {${(0,0)}$};
	\node at (2.5,2.4) {${(1,0)}$};
	\node at (3.5,2.4) {${(0,1)}$};
	\node at (0.5,1.6) {${(1,0)}$};
	\node at (1.5,1.6) {${(0,1)}$};
	\node at (2.5,1.6) {${(0,0)}$};
	\node at (3.5,1.6) {${(1,1)}$};
	\node at (0.5,0.8) {${(0,1)}$};
	\node at (1.5,0.8) {${(1,0)}$};
	\node at (2.5,0.8) {${(1,1)}$};
	\node at (3.5,0.8) {${(0,0)}$};
\end{tikzpicture}\;\;,
\ee
i.e., is a block matrix, where all the entries of a same block have the same $\Z_2^2$-degree. Since the classical determinant only works for matrices with commuting entries, and the entries of a Jacobian matrix as above do not necessarily commute, we have to look for an appropriate determinant, i.e., for the $\Z_2^2$-, or, more generally, the $\Z_2^n$-Berezinian. This higher Berezinian has been constructed in \cite{COP12}. For the corresponding new integration theory, we refer to \cite{Pon16} and \cite{GKP16}.\medskip }

{Assume now that the elements of $\Z_2^n$ are ordered lexicographically. Let $n,p,q_1,\ldots,q_{2^n-1}\in\N$ and set $\mathbf{q}=(q_1,\ldots,q_{2^n-1})$. Consider $p$ coordinates $x^1,\ldots,x^p$ of degree $s_0=0\in\Z_2^n$ (resp., $q_1$ coordinates $\xi^1,\ldots,\xi^{q_1}$ of degree $s_1\in\Z_2^n$, $q_2$ coordinates $\xi^{q_1+1},\ldots,\xi^{q_1+q_2}$ of degree $s_2$, ...) and denote by $x=(x^1,\ldots,x^p)$ (resp., $\xi=(\xi^1,\ldots,\xi^q)$) the tuple of all the zero degree (resp., all the nonzero degree) coordinates (of course $q=\sum_kq_k$). These coordinates $u=(x,\xi)$ commute according to the already mentioned rule (\ref{CommRule}). More precisely, if $u^\za$ and $u^\zb$ are coordinates of $\Z_2^n$-degree $s_k$ and $s_\ell$, respectively, we have the $\Z_2^n$-commutation rule
\be\label{Z2n-com} u^\za u^\zb = (-1)^{\la s_k,s_\ell\ra} u^\zb u^\za\;.
\ee }

A \emph{$\Z_2^n$-superdomain} of dimension $p|\mathbf{q}$ is a \emph{ringed space} ${\cU^{\,p|\mathbf{q}}} = (U,{\cO}_{U})$, where $U\subset\R^p$ is the open range of $x$, and where the structure sheaf is defined over any open $V\subset U$ as the $\Z_2^n$-commutative associative unital $\R$-algebra
\be\label{local}
{\cO}_U(V)=C^\infty_U(V)[[\zx^1,\dots,\zx^q]]
\ee
of formal power series
\be\label{pf1}
 f(x,\zx)= \sum_{|\mu|=0}^{\infty}   f_{\mu_1\ldots\mu_q}(x)\;(\zx^1)^{\mu_1}\ldots(\zx^q)^{\mu_q} =\sum_{|\mu|=0}^{\infty} f_\mu(x) \zx^\mu\;
\ee
in the formal variables $\xi^1,\dots,\xi^q$ with coefficients in $\Ci_U(V)$ (standard multiindex notation).\medskip

We refer to any ringed space of $\Z_2^n$-commutative associative unital $\R$-algebras as a \emph{$\Z_2^n$-ringed space} and to the functions (\ref{pf1}) as the {\em local $\Z_2^n$-superfunctions} (or just $\Z_2^n$-functions).\medskip

A \emph{$\Z_2^n$-supermanifold} of dimension $p|\mathbf{q}$ is a $\Z_2^n$-ringed space $(M,\cA_M)$ locally isomorphic to a $\Z_2^n$-superdomain of dimension $p|\mathbf{q}$.\medskip

For additional motivation, the discussion of Neklyudova's equivalence \cite{Lei11}, as well as for details on $\Z_2^n$-supermanifolds, their morphisms, and the $\Z_2^n$-Berezinian, we refer the reader to \cite{CGP16} and \cite{COP12}.\medskip

The prototypical classical smooth supermanifold is the `superization of a vector bundle', i.e., the locally super ringed space $(M,\zG(\w E^*))$, where $E$ is a vector bundle over $M$. It is usually denoted $\zP E$ or $E[1]$ and viewed as the total space $E$ with fiber coordinates of parity $1$. We refer to a supermanifold $\zP E=E[1]$ induced by a vector bundle as to a \emph{split supermanifold}, since in this case the algebra of superfunctions splits canonically into the subalgebra of smooth functions on $M$ and the ideal of  nilpotent elements. The importance of the example relies on the fact that any supermanifold is of this type: for any smooth supermanifold $\cM=(M,\cA)$ over a smooth classical manifold $M$, there exists a vector bundle $E$ over $M$, such that $\cal M$ is (noncanonically) diffeomorphic to $\zP E$. The bundle $E$ can be interpreted in terms of the normal bundle of the carrier manifold $M$ \cite{Vor}. A variant of this splitting theorem, which is usually attributed to M. Batchelor \cite{Bat1}, \cite{Bat2}, had already been proven a bit earlier by K. Gaw\c edzki \cite{Gaw77} and known to Berezin \cite{Ber79, Ber83,Ber87}. Moreover, D. Leites informed us that also A. A. Kirillov and A. N. Rudakov convinced themselves independently of the correctness of the claim. Meanwhile, many authors wrote about the statement (using often different approaches), e.g., \cite{BR84} and \cite{Man}, to cite at least two.\medskip

A similar proposition holds for $\N$-manifolds \cite{BP12}: Any smooth $\N$-manifold $\cM=(M,\cA)$ of degree $n$, $n\in \N\setminus\{0\}$, is noncanonically diffeomorphic to a split $\N$-manifold $\zP E$, where $E=\bigoplus_{i=1}^nE_{-i}$ is a graded vector bundle over $M$ concentrated in degrees $-1,\ldots,-n$, and where $\zP E=\bigoplus_{i=1}^nE_{-i}[i]$, what means that the fiber coordinates of $E_{-i}$ are viewed as having degree $i\in\N$. With a use of similar methods one can easily prove also a splitting theorem for $n$-fold vector bundles: any $n$-fold vector bundle $E$ is noncanonically isomorphic with the direct sum (over $M$) of vector bundles
\be\label{nv-split}
E\simeq \bigoplus_{i\in\Z_2^n\setminus \{0\}}E_i\,.
\ee
We refer to Section \ref{nonsplit} for more details.\medskip

On the other hand, Batchelor-Gaw\c edzki theorem does not hold for complex analytic supermanifolds: there exist holomorphic supermanifolds whose structure sheaf is NOT isomorphic to the sheaf of sections of a bundle of exterior algebras \cite{Gre82}.\medskip

The goal of this text is to show that an analog of the Batchelor-Gaw\c edzki result holds true for $\Z_2^n$-supermanifolds which, in view of their local model made of formal power series in all nonzero degree coordinates, remind one of the analytic case.\medskip

\noindent{\bf Sources}. Of course, there is an extensive literature on Supergeometry and related topics and it is impossible to give complete references. The sources that had an impact on the present text are: \cite{DAL}, \cite{Lei11}, \cite{Man}, \cite{DM99}, \cite{CCF}, \cite{DSB}, \cite{BP12}, \cite{GKP1}, \cite{GKP2}, and \cite{GKP12}.

\section{Split and nonsplit $\Z_2^n$-supermanifolds}

\subsection{Split $\Z_2^n$-supermanifolds}

Start with a $\Z_2^2\setminus\{0\}$-graded vector bundle $E=E_{01}\oplus E_{10}\oplus E_{11}$ over a manifold $M$, and set $$\zP E=E_{01}[01]\oplus E_{10}[10]\oplus E_{11}[11]\;,$$ where the degrees in the square brackets are assigned to the fiber coordinates (since coordinate transformations are linear, this assignment is of course consistent). Denote by $\odot^k (\zP E)^*$, $k\ge 2,$ the $\Z_2^2$-graded symmetric ($\Z_2^2$-commutative) $k$-tensor bundle of $(\zP E)^*$ and consider the function sheaf \be\label{GradMfdDVB}\cA(\zP E):=\prod_{k\ge 0}\zG(\odot^k(\zP E)^*)\;.\ee The limit $\cA(\zP E)$ is a sheaf of $\Z_2^2$-graded $\Ci$-modules, as well as a sheaf of $\Z_2^2$-superalgebras. The multiplication $\odot$ is the standard one: when writing formal series $\sum_{k=0}^\infty\Psi_k$ instead of families $(\Psi_0,\Psi_1,\ldots)$, we get \be\label{MultFormSer}\sum_{k}\Psi_k'\odot\sum_\ell\Psi_\ell''=
\sum_n\sum_{k+\ell=n}\Psi_k'\odot\Psi_\ell''\;.\ee

\noindent In addition, the sheaf $\cA(\zP E)$ is locally canonically isomorphic to $\Ci_{\R^p}[[\xi,\zh,\zy]]$, where $p=\dim M$ and where $\xi$, $\zh$, and $\zy$ are the fiber coordinates of $E_{01}$, $E_{10}$, and $E_{11}$, respectively. Hence, the pair $(M,\cA(\zP E))$ is a $\Z_2^2$-supermanifold. The assignment (\ref{GradMfdDVB}) can easily be extended to $\Z_2^n\setminus\{0\}$-graded vector bundles and $\Z_2^n$-supermanifolds.

\begin{definition} We refer to a $\Z_2^n$-supermanifold $(M,\cA(\zP E))$, which is implemented by a $\Z_2^n\setminus\{0\}$-graded vector bundle $E$ over $M$, as a \emph{split $\Z_2^n$-supermanifold}.\end{definition}

\subsection{Superizations of $n$-fold vector bundles and nonsplit $\Z_2^n$-supermanifolds}\label{nonsplit}

In \cite{CGP16}, we showed that the $\Z_2^n$-superization of any $n$-fold vector bundle, $n\ge 1$, leads to a $\Z_2^n$-supermanifold.
Recall that an $n$-fold vector bundle is a manifold $E$ equipped with $n$ compatible vector bundle structures. The compatibility condition means that the corresponding Euler vector fields pairwise commute \cite{GR09}. The Euler vector fields induce an $\N^n$-grading in the structure sheaf of $E$ and one can choose an atlas consisting of homogeneous functions of degrees $\le \ao$ with respect to the lexicographical order, where $\ao=(1,\dots, 1)\in\{ 0,1\}^n\simeq \Z_2^n$. We construct the $\Z^n_2$-superization $\zP E$ using the same local coordinates and transformation rules but requiring the sign rules (\ref{Z2n-com}) instead of the commutation. The structure is consistent, because it turns out that the factors in products of coordinates appearing in the transformation rules $\Z_2^n$-commute, so that the cocycle condition remains valid in the $\Z_2^n$-commutative setup.

Now, according to the splitting theorem for $n$-fold vector bundles, we have a noncanonical identification (\ref{nv-split}), which implies a splitting theorem for $\zP E$. However, as mentioned before,
generally $n$-fold vector bundles do not split canonically, so $\zP E$ is generally not canonically split.

In particular, for a vector bundle $V$ over $M$, the tangent bundle $E=\sT V$ is known to be canonically a double vector bundle. It is isomorphic to $V\oplus_M V\oplus_M \sT M$, but there is no canonical identification
$$\sT V\simeq V\oplus_M V\oplus_M \sT M\;$$ of double vector bundles, in general. Let us further emphasize that the canonical vector bundle structure of the {\small RHS} over $M$ is not part of its double vector bundle structure.

\section{Batchelor-Gaw\c edzki theorem}

To our knowledge, even in the case of classical supermanifolds, only a small number of proofs of the Batchelor-Gaw\c edzki theorem, which are neither too short (and therefore difficult to understand), nor too long (and therefore time consuming to read), can be found in the literature. Below, we expand the half-page cohomological proof of \cite{Man} and extend it from the setting of standard supermanifolds to the formal series context of $\Z_2^n$-supermanifolds. As we must deal with non-nilpotent formal variables, the proof needs an additional attention. For information about series in abstract topological algebras (resp., sheaf-theoretic issues), we refer the reader to \cite{CGP14}, Section 7.1 (resp., Sections 7.3 and 7.4, as well as the proof of Proposition 7.7).

{\begin{remark} As a matter of fact the proof of the Batchelor-Gaw\c edzki theorem {\it is} quite involved, as well in the standard super-case, as, a fortiori, in the general $\Z_2^n$-case. In ordinary smooth Supergeometry, this splitting theorem means that the considered supermanifold admits an atlas with coordinate changes $(x,\xi)\rightleftarrows(x',\xi')$ of the type $$x'^i=x'^i(x)\quad\text{and}\quad \xi'^a=\zy^a_b(x)\xi^b\;.$$ In the general situation, this result is equivalent to the statement that any smooth $\Z_2^n$-supermanifold can noncanonically be equipped with an atlas, whose coordinate transformations are of the form $$x'^i=x'^i(x)\quad\text{and}\quad \xi'^a=\zy^a_b(x)\xi^b\;,$$ where $\zy(x)$ is a block diagonal matrix with $q_k\times q_k$ diagonal blocks ($k\in\{1,\ldots,2^n-1\}$). In other words, the $q_k$ coordinates $\xi'^a$ of $\Z_2^n$-degree $s_k$ depend only on the old $q_k$ coordinates $\xi^b$ of the same degree $s_k$. Of course, a direct proof of this result, which does not take advantage of the power of homology and sheaf theories, is highly computational and no attempt to write down such an approach will be made here.\end{remark}}

\subsection{Cohomological invariant}

In the following, we consider sheafs $\cA_M, \Ci_M,\ldots$ over a smooth manifold $M$, but will, for simplicity, just write $\cA,\Ci,\ldots$. Let $\cM=(M,\cA)$ be a $\Z_2^n$-supermanifold, $n\ge 1$, let $\ze:\cA\to \Ci$ be the projection onto $\Ci$, let $\cJ=\ker\ze$, and let $$\cA\supset \cJ\supset \cJ^2\supset\ldots$$ be the decreasing filtration of the structure sheaf by sheaves of $\Z_2^n$-graded ideals.
The quotients $\cJ^{k+1}/\cJ^{k+2}$, $k\ge 0$, are locally finite free sheaves of modules over $\Ci\simeq\cA/\cJ$.
In particular, $${\cal S}:={\cJ}/{\cJ}^2$$ is a locally finite free sheaf of $\Z_2^n\setminus\{0\}$-graded $\Ci$-modules \cite[Example 3.2]{CGP14}.
Hence, there exists a $\Z_2^n\setminus\{0\}$-graded vector bundle $E\to M$ such that $${\cal S}\simeq \zG((\zP E)^*)\;.$$ For instance, in the case $n=2$, we get $${\cal S}\simeq\zG(E_{01}[01]^*\oplus E_{10}[10]^*\oplus E_{11}[11]^*)\;.$$ As above, denote by $\odot$ the $\Z_2^n$-graded symmetric tensor product of $\Z_2^n$-graded $\Ci$-modules and of $\Z_2^n$-graded vector bundles.
Then \be\label{Isos}\zG(\odot^{k+1}(\zP E)^*)\simeq\odot^{k+1}{\cal S}\simeq\cJ^{k+1}/\cJ^{k+2}\;\ee (indeed, the sheaf morphism, which is well-defined on sections by $$\odot^{k+1}\cJ/\cJ^2\ni [s_1]\odot \ldots \odot [s_{k+1}] \mapsto [s_1\cdots s_{k+1}]\in\cJ^{k+1}/\cJ^{k+2}\;,$$ is locally an isomorphism). Our goal is to show that \be\label{Batchelor}\cA(\zP E):=\prod_{k\ge -1}\zG(\odot^{k+1}(\zP E)^*)=\prod_{k\ge -1}\odot^{k+1}{\cal S}\simeq \cA\;\ee as sheaf of $\Z_2^n$-commutative associative unital $\R$-algebras. Indeed, we then have the

\begin{thm} Any smooth $\Z_2^n$-supermanifold is (noncanonically) isomorphic to a split $\Z_2^n$-supermanifold.\end{thm}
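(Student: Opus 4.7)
The plan is to construct the required isomorphism $\cA(\zP E)\simeq\cA$ of sheaves of $\Z_2^n$-commutative associative unital $\R$-algebras by a non-abelian \v{C}ech cohomology argument, in the spirit of the classical Manin proof. Since both $\cA$ and $\cA(\zP E)$ share the same local model $\Ci[[\xi^1,\ldots,\xi^q]]$ with the $\Z_2^n$-sign rule, a sufficiently fine open cover $\{U_i\}_{i\in I}$ of $M$ admits local isomorphisms $\phi_i:\cA(\zP E)|_{U_i}\to\cA|_{U_i}$ that can be normalised so as to induce the identity on the associated graded (\ref{Isos}). On each overlap $U_{ij}:=U_i\cap U_j$ the comparison $g_{ij}:=\phi_j^{-1}\circ\phi_i$ is an automorphism of $\cA(\zP E)|_{U_{ij}}$ that reduces to the identity modulo $\cJ$, and $(g_{ij})$ forms a \v{C}ech $1$-cocycle with values in the sheaf of groups $\mathcal{G}\subset\underline{\mathrm{Aut}}(\cA(\zP E))$ of such unipotent $\Z_2^n$-algebra automorphisms. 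The theorem reduces to showing that $(g_{ij})$ is a coboundary: if a $0$-cochain $(\zt_i)$ satisfies $\zt_j\circ g_{ij}\circ\zt_i^{-1}=\id$, then $\phi_i\circ\zt_i^{-1}$ agree on overlaps and glue to a global isomorphism $\cA(\zP E)\simeq\cA$.

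To split $(g_{ij})$ filtration level by filtration level, I would introduce the descending chain of sheaves of normal subgroups $\mathcal{G}=\mathcal{G}^{(0)}\supset\mathcal{G}^{(1)}\supset\mathcal{G}^{(2)}\supset\cdots$, where $\mathcal{G}^{(k)}$ consists of automorphisms inducing the identity on $\cA(\zP E)/\cJ^{k+1}$. For $\varphi=\id+D\in\mathcal{G}^{(k)}$, the graded Leibniz defect of $D$ lies in $\cJ^{2k+2}\subset\cJ^{k+2}$, so modulo $\mathcal{G}^{(k+1)}$ the operator $D$ descends to a $\Z_2^n$-graded derivation of $\cA(\zP E)$ with values in the locally finite free $\Ci$-module $\cJ^{k+1}/\cJ^{k+2}\simeq\odot^{k+1}{\cal S}$ (see (\ref{Isos})). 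The abelian sheaf $\mathcal{G}^{(k)}/\mathcal{G}^{(k+1)}$ is therefore a sheaf of $\Ci$-modules, hence a fine sheaf on the paracompact smooth manifold $M$, and $\check{H}^1(M,\mathcal{G}^{(k)}/\mathcal{G}^{(k+1)})=0$ for every $k\ge 0$. The standard inductive lifting argument in non-abelian \v{C}ech cohomology then produces, for each $N\ge 1$, a $0$-cochain $(\zt_i^{(N)})$ such that $\zt_j^{(N)}\circ g_{ij}\circ(\zt_i^{(N)})^{-1}\in\mathcal{G}^{(N)}$ on every $U_{ij}$, with $\zt_i^{(N+1)}(\zt_i^{(N)})^{-1}\in\mathcal{G}^{(N)}$.

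The main obstacle, and the genuinely new point relative to the classical super case, is that the nonzero even coordinates of a $\Z_2^n$-supermanifold are not nilpotent, so the filtration $\{\cJ^k\}$ never stabilises at zero and no finite $N$ suffices to conclude; one must take the projective limit of the $\zt_i^{(N)}$. The key technical step is therefore to prove that $(\zt_i^{(N)})_N$ converges, as $N\to\infty$, to a genuine $\Z_2^n$-algebra automorphism $\zt_i\in\mathcal{G}|_{U_i}$, and this is precisely what the formal-series framework of \cite{CGP14}, \S 7.1 and \S 7.3--7.4 (and the proof of Proposition 7.7 therein) is designed to handle: the corrections at stage $N$ lie in $\mathcal{G}^{(N)}$, so for any local section $a$ the sequence $\zt_i^{(N)}(a)$ is Cauchy in the $\cJ$-adic topology, and completeness of the stalks with respect to that topology forces a limit $\zt_i(a)$ which depends continuously on $a$ and automatically inherits the algebra-homomorphism property. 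Passing to the limit in the cocycle relation then gives $\zt_j\circ g_{ij}\circ\zt_i^{-1}=\id$, trivialising $(g_{ij})$ and producing the global isomorphism $\cA\simeq\cA(\zP E)$, which shows that $\cM$ is noncanonically isomorphic to the split $\Z_2^n$-supermanifold $\zP E=(M,\cA(\zP E))$.
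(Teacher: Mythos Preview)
Your argument is correct and takes a genuinely different route from the paper. The paper proceeds in two separate stages. First it constructs a section $\zf:\Ci\to\cA$ of the projection $\ze:\cA\to\Ci$ (stated independently as a ``projection of $\cM$ onto $M$'' theorem): one builds $\zf_k:\Ci\to\cA/\cJ^{k+1}$ inductively, showing that any two local extensions over $\Z_2^n$-chart domains differ by a derivation $\Ci\to(\cJ^{k+1}/\cJ^{k+2})^0$, i.e.\ a section of the fine sheaf $TM\otimes(\odot^{k+1}(\zP E)^*)^0$, whose \v{C}ech $H^1$ vanishes. Second, using $\zf$ to regard $0\to\cJ^2\to\cJ\to{\cal S}\to 0$ as a sequence of $\Ci$-modules, it splits this (again via an induction through the locally finite free quotients $\cJ/\cJ^k$) and extends the resulting $\zF^1:{\cal S}\to\cJ$ multiplicatively and by continuity to $\zF:\cA(\zP E)\to\cA$. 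Your approach merges these two inductions into a single non-abelian one: you trivialise the cocycle $(g_{ij})$ level by level, and the abelian quotient $\mathcal{G}^{(k)}/\mathcal{G}^{(k+1)}$ you use is the full sheaf of degree-$0$ derivations of $\cA(\zP E)$ into $\odot^{k+1}{\cal S}$, which simultaneously contains the ``horizontal'' piece $TM\otimes(\odot^{k+1}(\zP E)^*)^0$ handled by the paper's first stage and the ``vertical'' piece $\mathrm{Hom}_{\Ci}({\cal S},\odot^{k+1}{\cal S})^0$ implicit in its second stage. Both proofs rest on the same two pillars---vanishing of $\check{H}^1$ for locally finite free $\Ci$-module sheaves, and $\cJ$-adic Hausdorff completeness to pass to the limit---but the paper's decomposition yields the embedding $\Ci\hookrightarrow\cA$ as a standalone result of independent interest, while your argument is more economical. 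Two small points worth tightening in a write-up: the normalisation of $\phi_i$ uses that any $\Ci$-linear automorphism of ${\cal S}|_{U_i}$ extends (by functoriality of $\prod_k\odot^k$) to an automorphism of $\cA(\zP E)|_{U_i}$, and the surjectivity of $\mathcal{G}^{(k)}\to\mathcal{G}^{(k)}/\mathcal{G}^{(k+1)}\simeq\mathrm{Der}^0(\cA(\zP E),\odot^{k+1}{\cal S})$ is obtained by exponentiating a lift of the given derivation to a derivation of $\cA(\zP E)$ with values in $\cJ^{k+1}$, the exponential series converging precisely by $\cJ$-adic completeness.
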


It is clear that locally the sheaves (\ref{Batchelor}) coincide. To prove that they are isomorphic, we will build a morphism $\prod_{k\ge -1}\odot^{k+1}{\cal S}\to \cA$ of sheaves of $\Z_2^n$-superalgebras. The idea is to extend a morphism ${\cal S}\to \cA$, or $\cJ/\cJ^2\to \cJ$. The latter will be obtained as a splitting of the sequence $0\to \cJ^2\to \cJ\to \cJ/\cJ^2\to 0$. One of the problems to solve is to show that this sequence can be viewed as a sequence of sheaves of $\Ci$-modules. Therefore, we need an embedding $\Ci\to \cA$.

\subsection{Projection of $\cM$ onto $M$}

We will actually construct a splitting of the short exact sequence $0\to\cJ\to\cA\stackrel{\ze}{\to}\Ci\to 0$, i.e., a morphism $\zf:\Ci\to \cA$ of sheaves of $\Z_2^n$-superalgebras such that $\ze\circ\zf=\op{id}$. More precisely, we build $\zf$ as the limit of an $\N$-indexed sequence of sheaf morphisms $\zf_{k}:\Ci\to \cA/\cJ^{k+1}$:

\[
\begin{tikzpicture}
\matrix(m)[matrix of math nodes, column sep=2em, row sep=4em]
{
&&&\Ci&&&\\
&&&\cA\simeq\varprojlim_k\cA/\cJ^k&&&\\
\ldots&&{\cA}/{\cJ^{k+1}}&&{\cA}/{\cJ^{k+2}}&&\ldots\\
};
\path[->]
(m-3-7)edge node[auto]{$ $} (m-3-5)
(m-3-5)edge node[auto]{$f_{k,k+1} $} (m-3-3)
(m-3-3)edge node[auto]{$ $} (m-3-1)
(m-2-4)edge node[right]{$\pi_k$} (m-3-3)
            edge node[left]{$\pi_{k+1}$} (m-3-5)
(m-1-4)edge[bend right=20] node[left]{$\zf_{k}$} (m-3-3)
            edge[bend left=20] node[right]{$\zf_{k+1}$} (m-3-5)
(m-1-4)edge node[auto]{$\zf$} (m-2-4)
;
\end{tikzpicture}
\]

\noindent In this diagram, the isomorphism $\simeq$ is due to Hausdorff-completeness of abstract $\Z_2^n$-function sheafs and algebras \cite[Section 7.3]{CGP16}. This reference also explains the morphisms $\zp_k$ and $f_{k,k+1}$. It thus suffices to construct the $\zf_k$ so that the upper triangle commutes. This sequence $\zf_k$ will be obtained by induction on $k$, starting from $\zf_0=\id$: we assume that we already got $\zf_{i+1}$ as an extension of $\zf_i$ for $0\le i\le k-1$, and we aim at extending $\zf_k:\Ci\to \cA/\cJ^{k+1}$ to $$\zf_{k+1}:\Ci\to \cA/\cJ^{k+2}\;,$$ in the sense that \be\label{Ext}f_{k,k+1}\circ\zf_{k+1}=\zf_{k}\;.\ee

To that end, we build, for any open subset $\zW\subset M$, an extension $\zf_{k+1,\zW}:\Ci(\zW)\to \cA(\zW)/\cJ^{k+2}(\zW)$ of $\zf_{k,\zW}$, via a consistent construction of extensions of the $\zf_{k,U}$ by local (in the sense of (pre)sheaf morphisms) degree zero unital $\R$-algebra morphisms \be\label{LocExt}\zf_{k+1,U}:\Ci(U)\to \cA(U)/\cJ^{k+2}(U)\simeq\Ci(U)[[\xi^1,\ldots,\xi^q]]_{\le k+1}\ee over a cover $\cal U$ of $\zW$ by $\Z_2^n$-chart domains $U$. Here subscript $\le k+1$ means that we confine ourselves to `series' whose terms contain at most $k+1$ formal parameters. Further, `consistent' means that, if $U,V$ are two domains of the cover, we must have \be\label{Consistency1}\zf_{k+1,U}|_{U\cap V}=\zf_{k+1,V}|_{U\cap V}\;\ee (note that, if we use the identification (\ref{LocExt}), the restricted morphisms (\ref{Consistency1}) are expressed in different coordinate systems).
\begin{lem}
Over any $\Z_2^n$-chart domain $U$, there exists an extension $\zf_{k+1,U}:\Ci(U)\to \cO(U)_{\le k+1}:=\Ci(U)[[\xi^1,\ldots,\xi^q]]_{\le k+1}$ of $\zf_{k,U}$ as local degree zero unital $\R$-algebra morphism.
\end{lem}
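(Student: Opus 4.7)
The plan is to exploit the coordinate description in the chart and produce $\varphi_{k+1,U}$ by an explicit formal Taylor expansion formula. On $U$ we have $\cA(U)\simeq \Ci(U)[[\xi^1,\dots,\xi^q]]$, so that $\cJ^m(U)$ is the sheaf of series whose nonzero terms all have total $\xi$-degree $\ge m$, and $\cO(U)_{\le k+1}=\cA(U)/\cJ^{k+2}(U)$ consists of truncations up to $\xi$-degree $k+1$. Since $\zf_{k,U}$ is a degree-zero unital $\R$-algebra morphism splitting $\ze$ modulo $\cJ^{k+1}$, I can write
\begin{equation}
\zf_{k,U}(x^i)=x^i+\alpha^i,\qquad \alpha^i\in\cJ(U)/\cJ^{k+1}(U),
\end{equation}
where each $\alpha^i$ has total $\Z_2^n$-degree $0$. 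By the standard Hadamard/Taylor argument for morphisms of smooth algebras into a Hausdorff-complete local $\Z_2^n$-superalgebra (cf.\ \cite{CGP14,CGP16}), such a morphism is determined by these data through the formal Taylor formula.

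For each $i$, pick any lift $\tilde\alpha^i\in\cO(U)_{\le k+1}$ of $\alpha^i$ in total $\Z_2^n$-degree $0$ (for instance, the canonical representative whose $\xi$-degree-$(k+1)$ component is zero), and define
\begin{equation}
\zf_{k+1,U}(f)\;:=\;\sum_{\mu\in\N^p}\frac{1}{\mu!}\,(\partial^\mu f)(x)\,\tilde\alpha^{\mu},\qquad f\in\Ci(U).
\end{equation}
The right-hand side is a well-defined element of $\cO(U)_{\le k+1}$: indeed, each $\tilde\alpha^i\in\cJ/\cJ^{k+2}$ forces $\tilde\alpha^\mu\in\cJ^{|\mu|}/\cJ^{k+2}$, which vanishes as soon as $|\mu|\ge k+2$, so only finitely many terms contribute, avoiding any convergence issue despite the presence of non-nilpotent nonzero-degree even coordinates. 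Each $\tilde\alpha^i$ is central in the $\Z_2^n$-commutative algebra $\cO(U)_{\le k+1}$ (total degree zero gives $\la 0,s\ra=0$ for every $s\in\Z_2^n$), and the $\tilde\alpha^i$ pairwise commute. The ordinary Leibniz identity $\partial^\mu(fg)=\sum_{\nu+\rho=\mu}\binom{\mu}{\nu}\partial^\nu f\,\partial^\rho g$, combined with the equality $\tilde\alpha^{\nu+\rho}=\tilde\alpha^\nu\tilde\alpha^\rho$, then yields $\zf_{k+1,U}(fg)=\zf_{k+1,U}(f)\,\zf_{k+1,U}(g)$ by the standard rearrangement, with no sign twists. $\R$-linearity, unitality and degree-zeroness are immediate from the formula.

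There remain two verifications. For the extension property $f_{k,k+1}\circ\zf_{k+1,U}=\zf_{k,U}$, reducing $\tilde\alpha^i$ to $\alpha^i$ modulo $\cJ^{k+1}$ turns the Taylor sum into the analogous expression in $\cO(U)_{\le k}$; by the determinacy of algebra morphisms out of $\Ci(U)$ through the images of the generators, this expression must coincide with $\zf_{k,U}(f)$. For the presheaf (``local'') character, both the coordinates $x^i,\xi^j$ and the chosen lifts $\tilde\alpha^i$ restrict canonically to any chart subdomain $V\subset U$, and the Taylor formula is built pointwise in $x$, so it commutes with restriction. The only genuinely delicate point — and the main obstacle worth flagging — is the handling of the non-nilpotent nonzero-degree even coordinates: the Taylor series is not a polynomial identity in the ambient algebra, but it is a polynomial expression in the truncated quotient $\cA(U)/\cJ^{k+2}(U)$ thanks to the $\cJ$-adic filtration and Hausdorff-completeness of $\cA(U)$; this is what rescues both the existence of the sum and the algebra-morphism computation. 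Globalising these chart-level extensions into a presheaf morphism on arbitrary opens $\zW\subset M$ is the content of the next step in the paper.
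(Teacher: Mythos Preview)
Your construction is the same as the paper's: define $\zf_{k+1,U}$ by the formal Taylor expansion in the (lifted) nilpotent part of $\zf_{k,U}(x^i)$ and then truncate. The paper packages this as ``apply \cite[Theorem 7.10]{CGP16} to the values $\zf_{k,U}(x^i)\in\cO(U)$ to get $\overline{\zf}_{k,U}:\Ci(U)\to\cO(U)$, then set $\zf_{k+1,U}:=\overline{\zf}_{k,U}|_{\le k+1}$'', which is exactly your formula.

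The one place you are thinner than the paper is the extension property $f_{k,k+1}\circ\zf_{k+1,U}=\zf_{k,U}$. You dispose of it by invoking ``determinacy of algebra morphisms out of $\Ci(U)$ through the images of the generators''. That is precisely the statement whose proof occupies most of the paper's argument: the cited Theorem~7.10 gives uniqueness for morphisms into the \emph{full} $\cO(U)$, not a priori into the truncated quotient $\cO(U)_{\le k}$, and the authors do not treat uniqueness in the truncated target as a black box. Instead they show directly that $\zf_{k,U}$ and $\overline{\zf}_{k,U}|_{\le k}$ agree on all of $\Ci(U)$ by a stalk-level argument: both induced maps $\Ci_{x_0}\to\cO_{x_0}$ send $\frak{m}_{x_0}^{\ell}$ into $\frak{m}'^{\,\ell}_{x_0}$ (for $\zf_{k,U}$ this uses that truncation respects the $\frak{m}'$-adic filtration, \cite[Lemma 7.6]{CGP16}), and then polynomial approximation \cite[Theorem 7.11]{CGP16} forces the difference to lie in $\bigcap_\ell\frak{m}'^{\,\ell}_{x_0}=0$. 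So your appeal to determinacy is morally right, but in the $\Z_2^n$ setting with non-nilpotent formal variables it is exactly the content of the lemma rather than an available citation; if you want a self-contained proof you should supply this stalk/approximation step.
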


\begin{proof} We look for an extension $\zf_{k+1,U}$ of the local degree zero unital $\R$-algebra morphism $\zf_{k,U}:\Ci(U)\to \cO(U)_{\le k}\subset \cO(U)$ (where the latter is built step by step as an extension of $\zf_{0,U}=\op{id}$). Denote the coordinates in $U$ by $x=(x^1,\ldots,x^p)$. In view of \cite[Theorem 7.10]{CGP16}, the `pullbacks' $$\zf_{k,U}(x^i)=x^i+\sum_{1\le|\zm|\le k}f^i_\zm(x)\xi^\zm\in\cO(U)$$ uniquely define a degree zero unital $\R$-algebra morphism $\overline{\zf}_{k,U}:\Ci(U)\to \cO(U)$.
Since the algebra structure in $\cO(U)_{\le k}$ is given by the multiplication of $\cO(U)$ truncated at order $k$, it is easily seen that the restriction $\overline{\zf}_{k,U}|_{\le k}:\Ci(U)\to \cO(U)_{\le k}$ is still a local degree zero unital $\R$-algebra morphism. 
For the same reason, the morphisms $\zf_{k,U}$ and $\overline{\zf}_{k,U}|_{\le k}$ coincide on polynomial functions $P(x)\in\Ci(U)$. We will actually prove that these morphisms coincide on all functions $f(x)\in\Ci(U)$. Then $\overline{\zf}_{k,U}|_{\le k+1}$ is the searched extension $\zf_{k+1,U}$.\medskip

For this a digression is necessary. Let $x_0\in U$ and denote by $$\frak{m}_{x_0}=\{[g]_{x_0}: g(x_0)=0\}\quad\text{and}\quad\frak{m}'_{x_0}=\{[h]_{x_0}: (\ze h)(x_0)=0\}$$ the unique maximal homogeneous ideals of the stalks $\Ci_{x_0}$ and $\cO_{x_0}$ of the sheaves $\Ci$ and $\cO$. The morphism $\overline{\zf}_{k,U}$ (resp., $\overline{\zf}_{k,U}|_{\le k}$, $\zf_{k,U}$) is a local degree zero unital $\R$-algebra morphism (resp., are local degree zero $\R$-linear maps) $\Ci(U)\to\cO(U)$ (all three morphisms are viewed here as valued in $\cO(U)$) and thus defines an algebra morphism (resp., linear maps) $\overline{\zf}_{k,x_0}$ (resp., $\overline{\zf}_{k,x_0}|_{\le k}$, $\zf_{k,x_0}$) between $\Ci_{x_0}$ and $\cO_{x_0}$.\smallskip

{\it The linear maps $\overline{\zf}_{k,x_0}|_{\le k}$ and $\zf_{k,x_0}$ $(\,$induced by the maps we are comparing$\,)$ send $\frak{m}^\ell_{x_0}$ into $\frak{m'}^\ell_{x_0}$, $\ell\ge 1$ $(\,$what is obvious for $\overline{\zf}_{k,x_0}$$\,)$}.\smallskip

Indeed, if $[g]_{x_0}\in\frak{m}_{x_0}^\ell$, then $$[\overline{\zf}_{k,U}(g)]_{x_0}=\overline{\zf}_{k,x_0}[g]_{x_0}\in\frak{m}'^\ell_{x_0}\;,$$ so $$\overline{\zf}_{k,x_0}|_{\le k}[g]_{x_0}=[\overline{\zf}_{k,U}(g)|_{\le k}]_{x_0}\in\frak{m}'^\ell_{x_0}\;,$$ in view of \cite[Lemma 7.6]{CGP16}.

As for $\zf_{k,x_0}$, note first that, if $[g]_{x_0}\in \frak{m}_{x_0}$, then $\ze(\zf_{k,U}g)(x_0)=g(x_0)=0$, so that $\zf_{k,x_0}[g]_{x_0}\in\frak{m}'_{x_0}$. Moreover, if $[g_1]_{x_0},\ldots,[g_\ell]_{x_0}\in\frak{m}_{x_0}$, then $$[\zf_{k,U}(g_1)\ldots\zf_{k,U}(g_\ell)]_{x_0}=[\zf_{k,U}(g_1)]_{x_0}\ldots[\zf_{k,U}(g_\ell)]_{x_0}
=\zf_{k,x_0}[g_1]_{x_0}\ldots\zf_{k,x_0}[g_\ell]_{x_0}\in\frak{m}'^\ell_{x_0}\;,$$ hence $$\zf_{k,x_0}\lp[g_1]_{x_0}\ldots[g_\ell]_{x_0}\rp=\zf_{k,x_0}[g_1\ldots g_\ell]_{x_0}=[\lp\zf_{k,U}(g_1)\ldots\zf_{k,U}(g_\ell)\rp|_{\le k}]_{x_0}\in\frak{m}'^\ell_{x_0}\;.$$

We now come back to the comparison of the morphisms $\zf_{k,U}$ and $\overline{\zf}_{k,U}|_{\le k}\,.$ Consider $f(x)\in\Ci(U)$ and $x_0\in U$, as well as the `series' $$\zf_{k,U}(f)-\overline{\zf}_{k,U}(f)|_{\le k}\in\cO(U)_{\le k}\;.$$ Let $\ell>k$. Theorem \cite[Theorem 7.11]{CGP16} implies that there is a polynomial $P(x)$ such that $[f]_{x_0}-[P]_{x_0}\in\frak{m}_{x_0}^\ell.$ It follows that $$[\zf_{k,U}(f)-\overline{\zf}_{k,U}(f)|_{\le k}]_{x_0}= \zf_{k,x_0}\lp[f]_{x_0}-[P]_{x_0}\rp-\overline{\zf}_{k,x_0}|_{\le k}\lp[f]_{x_0}-[P]_{x_0}\rp\in\frak{m}'^\ell_{x_0}\;.$$ Hence, all the coefficients of $\zf_{k,U}(f)-\overline{\zf}_{k,U}(f)|_{\le k}$ vanish at $x_0$ \cite[Lemma 7.6]{CGP16}, for all $x_0\in U$, and all functions $f(x)\in\Ci(U)$.\end{proof}

To finalize the construction of the sheaf morphism $\zf:\Ci\to \cA$, it now suffices to solve the consistency problem. Let $U$ and $V$ be $\Z_2^n$-chart domains and let $\zf_{k+1,U}$ and $\zf_{k+1,V}$ be the preceding extensions of $\zf_{k,U}$ and $\zf_{k,V}$, respectively.

\smallskip
{\it The difference
\be\label{Consistency3}\zw_{{k+1},UV}(f):=\zf_{k+1,U}|_{U\cap V}(f)-\zf_{k+1,V}|_{U\cap V}(f)\in\cO(U\cap V)_{\le k+1}\;,\ee $f\in\Ci(U\cap V)$,
defines a derivation \be\label{Cochain}\zw_{k+1,UV}:\Ci(U\cap V)\to\cO(U\cap V)_{=k+1}\;.\ee}
\indent At this point, we should remember that the derivation property and the target space of $\zw_{k+1,UV}$ still need an explanation. Further, we should bear in mind that the two terms of the difference in (\ref{Consistency3}) are expressed in different coordinates and that above it is understood that we changed coordinates in the last term.

Concerning the target space, since $\zf_{k+1,U}=\overline{\zf}_{k,U}|_{\le k+1}=\zf_{k,U}+\overline{\zf}_{k,U}|_{= k+1}$, we have $$\zw_{k+1,UV}(f)=\zf_{k,U}|_{U\cap V}(f)+\overline{\zf}_{k,U}|_{=k+1}|_{U\cap V}(f)-\zf_{k,V}|_{U\cap V}(f)-\overline{\zf}_{k,V}|_{=k+1}|_{U\cap V}(f)\;,$$ where after coordinate transformation in the two last terms, we omit all terms of order $> k+1$. Note now that in a coordinate transformation the order cannot decrease, so that the second and fourth terms of the {\small RHS} contain only terms of order $k+1$. The same holds for the difference of the first and third terms. Indeed, since the $\zf_{k,U}$ have already been constructed consistently, they coincide on intersections up to order $k$: the remaining terms are of order $k+1$.

As for the derivation property, start from
\be\label{X}\zf_{k+1,U}|_{U\cap V}(fg)=\zf_{k+1,V}|_{U\cap V}(fg)+\zw_{k+1,UV}(fg)\,\ee and recollect the algebra morphism property of the $\zf_{k+1,U}:\Ci(U)\to \cO(U)_{\le k+1}\,.$
The left hand side equals
\be\begin{split}\label{Y}\zf_{k+1,U}|_{U\cap V}(f)\cdot\zf_{k+1,U}|_{U\cap V}(g)&=\\
\zf_{k+1,V}|_{U\cap V}&(f)\cdot\zf_{k+1,V}|_{U\cap V}(g)+
f\cdot\zw_{k+1,UV}(g)+\zw_{k+1,UV}(f)\cdot g\;.
\end{split}\ee
Indeed, the products are products in $\cO(U\cap V)$ that are truncated at order $k+1$. Comparing (\ref{X}) and (\ref{Y}), we finally get
\be \zw_{k+1,UV}(fg)=\zw_{k+1,UV}(f)\cdot g+f\cdot\zw_{k+1,UV}(g)\;.
\ee
This completes the proof of the claim (\ref{Cochain}).\medskip

In view of (\ref{Isos}), the map $\zw_{k+1,UV}$ is a derivation $$\zw_{k+1,UV}:\Ci(U\cap V)\to (\cJ^{k+1}(U\cap V))^0/(\cJ^{k+2}(U\cap V))^0\simeq \zG(U\cap V,(\odot^{k+1}(\zP E)^*)^{0})\;,$$ i.e., it is a vector field valued in symmetric $(k+1)$-tensors: $$\zw_{k+1,UV}\in \zG(U\cap V,TM\otimes (\odot^{k+1}(\zP E)^*)^{0})\;.$$ This \v{C}ech 1-cochain $\zw_{k+1}$ is obviously a 1-cocycle. However, as well-known, the existence of a partition of unity in $M$ implies that $\check{H}^{\bullet\ge 1}(M,\cE)=0$, for any locally free sheaf $\cE$ over $M$. Hence, there exists a 0-cochain $\zh_{k+1}$, i.e., a family $\zh_{k+1,U}\in \zG(U,TM\otimes(\odot^{k+1}(\zP E)^*)^{0})$, or, still, a family of derivations $$\zh_{k+1,U}:\Ci(U)\to \zG(U,(\odot^{k+1}(\zP E)^*)^{0})\simeq (\cJ^{k+1}(U))^0/(\cJ^{k+2}(U))^0\simeq\cO^0(U)_{= k+1}\;,$$ such that $$\zf_{k+1,U}|_{U\cap V}-\zf_{k+1,V}|_{U\cap V}=\zw_{k+1,UV}=\zh_{k+1,V}|_{U\cap V}-\zh_{k+1,U}|_{U\cap V}\;.$$ It is now easily checked that the sum $\zf'_{k+1,U}:=\zf_{k+1,U}+\zh_{k+1,U}:\Ci(U)\to\cO(U)_{\le k+1}$ is a local degree zero unital $\R$-algebra morphism, which satisfies the consistency condition and extends $\zf_{k,U}$. This proves the existence of the searched morphism $\zf:\Ci\to \cA$ of sheaves of $\Z_2^n$-commutative associative unital $\R$-algebras.\medskip

In fact $\ze\circ\zf=\op{id}$. Indeed, for any open subset $\zW\subset M$ and any $f\in\Ci(\zW)$, we have, on an open cover by $\Z_2^n$-chart domains $U\subset \zW$, $$(\ze_\zW\zf_\zW f)|_U=\ze_U\zf_U(f|_U)=(\op{id}_\zW f)|_U\;.$$ Hence, the

\begin{thm}
For any $\Z_2^n$-supermanifold $(M,\cA_M)$, the short exact sequence $$0\to {\cal J}_M\to \cA_M\stackrel{\ze}{\to }\Ci_M\to 0$$ of sheaves of $\Z_2^n$-commutative associative $\R$-algebras is noncanonically split.
\end{thm}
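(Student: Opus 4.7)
The plan is to construct a morphism $\zf:\Ci\to\cA$ of sheaves of $\Z_2^n$-commutative unital $\R$-algebras satisfying $\ze\circ\zf=\id$. Since the structure sheaf is Hausdorff-complete with respect to the $\cJ$-adic filtration, $\cA\simeq\varprojlim_k \cA/\cJ^{k+1}$, so it suffices to build a compatible family $\zf_k:\Ci\to\cA/\cJ^{k+1}$ with $f_{k,k+1}\circ\zf_{k+1}=\zf_k$, starting from $\zf_0=\id:\Ci\to\cA/\cJ\simeq\Ci$. The induction on $k$ proceeds in two steps at each level: a local extension over $\Z_2^n$-chart domains, followed by a \v{C}ech-cohomological gluing.

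For the local step, over each $\Z_2^n$-chart domain $U$ I would lift the formal-series coefficients of $\zf_{k,U}(x^i)$ to a degree-zero unital $\R$-algebra morphism $\overline{\zf}_{k,U}:\Ci(U)\to\cO(U)$ via the universal property for morphisms out of $\Ci(U)$, and set $\zf_{k+1,U}:=\overline{\zf}_{k,U}|_{\le k+1}$. To see that the truncation $\overline{\zf}_{k,U}|_{\le k}$ really coincides with $\zf_{k,U}$ on all smooth functions (not just polynomials), I would pass to stalks: for $x_0\in U$ and $\ell>k$, a Taylor polynomial $P$ satisfies $[f-P]_{x_0}\in\frak{m}_{x_0}^\ell$, and local degree-zero algebra morphisms send $\frak{m}_{x_0}^\ell$ into $\frak{m}'^\ell_{x_0}\subset\cO_{x_0}$, so the difference $\zf_{k,U}(f)-\overline{\zf}_{k,U}(f)|_{\le k}$ has all coefficients vanishing to arbitrary order at every point of $U$.

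For the gluing step, the differences $\zw_{k+1,UV}:=\zf_{k+1,U}|_{U\cap V}-\zf_{k+1,V}|_{U\cap V}$ take values purely in the top layer $\cO(U\cap V)_{=k+1}\simeq\cJ^{k+1}(U\cap V)/\cJ^{k+2}(U\cap V)$, because the $\zf_{k,U}$ already agree consistently on lower orders by the inductive hypothesis, and coordinate changes cannot decrease order. A short calculation comparing $\zf_{k+1,U}|_{U\cap V}(fg)$ with $\zf_{k+1,V}|_{U\cap V}(fg)$ and using the algebra morphism property of both shows that $\zw_{k+1,UV}$ is a degree-zero derivation of $\Ci(U\cap V)$. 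Via the identification $\cJ^{k+1}/\cJ^{k+2}\simeq\zG(\odot^{k+1}(\zP E)^*)$, this derivation is a section of the locally free sheaf $\cE_{k+1}:=TM\otimes(\odot^{k+1}(\zP E)^*)^0$. The resulting \v{C}ech 1-cochain $\zw_{k+1}$ is automatically a cocycle, and since $M$ is a smooth manifold carrying partitions of unity, $\check{H}^1(M,\cE_{k+1})=0$. Hence there is a 0-cochain $\zh_{k+1,U}\in\zG(U,\cE_{k+1})$ with $\zw_{k+1,UV}=\zh_{k+1,V}|_{U\cap V}-\zh_{k+1,U}|_{U\cap V}$, and the corrected local morphisms $\zf'_{k+1,U}:=\zf_{k+1,U}+\zh_{k+1,U}$ glue to a global sheaf morphism $\zf_{k+1}:\Ci\to\cA/\cJ^{k+2}$ extending $\zf_k$. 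Passing to the inverse limit yields $\zf$, and $\ze\circ\zf=\id$ may be checked on an open cover by chart domains.

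I expect the main obstacle to be the technical control of non-terminating formal series: unlike the classical super case, the non-zero-degree even generators are not nilpotent, so truncations and compositions of pullbacks must be compared carefully rather than manipulated as finite sums. In particular, without the Hadamard-type stalkwise argument in the local step, one cannot justify identifying a formally-defined lift $\overline{\zf}_{k,U}$ with the given $\zf_{k,U}$, and the inductive hypothesis would break down. Everything else (vanishing of $\check{H}^1$, the derivation property of the obstruction, Hausdorff completeness) is essentially formal once this non-nilpotence issue is handled.
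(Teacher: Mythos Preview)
Your proposal is correct and follows essentially the same route as the paper: build $\zf$ as an inverse limit of $\zf_k:\Ci\to\cA/\cJ^{k+1}$, extend locally over chart domains via the universal property of pullbacks together with the stalkwise Hadamard/Taylor argument to identify $\overline{\zf}_{k,U}|_{\le k}$ with $\zf_{k,U}$, and then globalize using that the \v Cech obstruction $\zw_{k+1,UV}$ is a derivation valued in the locally free sheaf $TM\otimes(\odot^{k+1}(\zP E)^*)^0$, hence a coboundary. The only place where the paper is slightly more explicit than your sketch is in checking that the truncated map $\zf_{k,U}$ (an algebra morphism only for the truncated product on $\cO(U)_{\le k}$, not for the full product on $\cO(U)$) still sends $\frak{m}_{x_0}^\ell$ into $\frak{m}'^\ell_{x_0}$; this requires a small extra observation rather than a direct appeal to multiplicativity, but your outline already anticipates the relevant subtlety.
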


\subsection{Algebra morphisms}
Due to the embedding $\zf:\Ci\to \cA$, the short exact sequence of sheaves of $\cA$-modules
\be\label{ses}0\to \cJ^2\to \cJ\to {\cal S}=\cJ/\cJ^2\to 0\ee can be viewed as a short exact sequence of sheaves of $\Ci$-modules.
Although $\cJ^2$ and $\cJ$ are {\it not locally finite free}, considering splittings of the short exact sequences $$0\to \cJ^2/\cJ^k\stackrel{i_k}{\longrightarrow} \cJ/\cJ^k\stackrel{p_k}{\longrightarrow} {\cal S}=\cJ/\cJ^2\to 0\;,$$ $k\ge 2$, of locally finite free sheaves of $\Ci$-modules, we can find a splitting $\Phi^1$ of (\ref{ses}).

We now extend $\zF^1$ to a morphism $\zF:\cA(\zP E)=\prod_{k\ge 0}\odot^k{\cal S}\to \cA$ of sheaves of $\Z_2^n$-commutative associative unital $\R$-algebras, putting $\zF:=\zf:\Ci\to \cA$ on $\Ci$, where $\zf$ is the above-constructed degree preserving unital algebra morphism, and
\be\label{Z}\zF(\psi_1\odot\ldots\odot\psi_k):=\zF^1(\psi_1)\cdot\ldots\cdot\zF^1(\psi_k)\in \cJ^k\subset\cA\ee
on $\odot^{k\ge 2}{\cal S}$, with the obvious extension to power series by Hausdorff continuity. This extension is well defined, since the {\small RHS} of (\ref{Z}) is $\Z_2^n$-commutative and $\Ci$-multilinear.

This map $\zF:\cA(\zP E)\to \cA$ respects the degrees and the units, and is an $\R$-algebra morphism, what completes the proof of the theorem.

\section{Acknowledgements}
The research of T. Covolo (resp., J. Grabowski, N. Poncin) was founded by the Luxembourgian NRF grant 2010-1, 786207 (resp., the Polish National Science Centre grant DEC-2012/06/A/ST1/00256, the University of Luxembourg grant GeoAlgPhys 2011-2014). The authors are grateful to S. Morier-Genoud and V. Ovsienko (resp., D. Leites, P. Schapira), whose work was the starting point of this paper (resp., for his suggestions and valuable support, for explanations on sheaf-theoretic aspects).

\end{document}